\documentclass[11pt]{article}

\usepackage[a4paper,margin=1in]{geometry}
\usepackage[T1]{fontenc}
\usepackage{lmodern}
\usepackage{microtype}
\usepackage{amsmath,amssymb,amsthm}
\usepackage{xcolor}
\usepackage[hypertexnames=false]{hyperref}

\numberwithin{equation}{section}

\newtheorem{theorem}{Theorem}[section]
\newtheorem{proposition}[theorem]{Proposition}
\newtheorem{lemma}[theorem]{Lemma}
\newtheorem{corollary}[theorem]{Corollary}
\newtheorem{conjecture}[theorem]{Conjecture}
\theoremstyle{remark}
\newtheorem{remark}[theorem]{Remark}

\newcommand{\R}{\mathbb R}
\newcommand{\Z}{\mathbb Z}
\newcommand{\norm}[1]{\left\lVert #1\right\rVert}
\newcommand{\abs}[1]{\left\lvert #1\right\rvert}
\newcommand{\transpose}{\mathsf T}

\hypersetup{
 colorlinks=true,
 linkcolor=blue!55!black,
 citecolor=blue!55!black,
 urlcolor=blue!55!black,
 pdftitle={Lp-Integrability of Radon--Nikodym Densities Between Harmonic Energy Measures on the Sierpinski Gasket},
 pdfauthor={Konstantinos Tsougkas},
 pdfsubject={Energy measures on the Sierpinski gasket},
 pdfkeywords={Sierpinski gasket, energy measure, Radon--Nikodym derivative, harmonic function, density-ratio power sum}
 }

\title{$L^p$-Integrability of Radon--Nikodym Densities Between\\
Harmonic Energy Measures\\
 on the Sierpi\'nski Gasket}
\author{Konstantinos Tsougkas\\
\small School of Engineering Science, University of Skövde\\
\small Box 408, 541 28 Skövde, Sweden\\
\small \texttt{konstantinos.tsougkas@his.se}}
\date{July 24, 2026}

\begin{document}

\maketitle
\begin{abstract}
It is known that the energy measures of any two nonconstant harmonic
functions on the standard Sierpi\'nski gasket are mutually absolutely
continuous.  Strichartz and Tse reported numerical evidence for
$L^p$-integrability of the corresponding Radon--Nikodym densities in the
range
\[
 1<p<\frac{\log 15}{\log 9}.
\]
For arbitrary ordered pairs of nonconstant harmonic functions, we prove
uniform boundedness of the associated density-ratio power sums, and hence
$L^p$-integrability, in the subinterval
\[
 1<p<\frac{\log(35/3)}{\log 9}.
\]
When the denominator harmonic direction is represented by the boundary
values $(0,-1,1)$, we prove boundedness throughout the full conjectured
interval.  
\end{abstract}
\medskip

\section{Introduction}

\subsection{Background and the Strichartz--Tse conjecture}

The field of analysis on fractals, developed by Kigami \cite{Kigami} and
Strichartz \cite{StrichartzBook}, equips certain self-similar sets with
resistance forms that play the role of the classical Dirichlet energy.  The
standard Sierpi\'nski gasket, denoted by $K$, is generated by the three
contractions
\[
 F_i(x)=\frac12(x-q_i)+q_i,
 \qquad i=0,1,2,
\]
where $q_0,q_1,q_2$ are the vertices of an equilateral triangle.  We write
$V_0=\{q_0,q_1,q_2\}$ for the boundary of $K$.  A \emph{word} of length
$n\ge1$ is the string $w=i_1\cdots i_n$ with $i_j\in\{0,1,2\}$ and we set
$\abs{w}=n$ and $F_w=F_{i_1}\circ\cdots\circ F_{i_n}$
so that the \emph{cells} $F_w(K)$ with $\abs{w}=n$ form the level-$n$ cell
decomposition of $K$.  The standard resistance form on $K$ is denoted by
$\mathcal E(\cdot,\cdot)$, and we write $\mathcal E(f)=\mathcal E(f,f)$.
The energy satisfies the self-similar identity
\[
 \mathcal E(f)=\frac53\sum_{i=0}^2\mathcal E(f\circ F_i).
\]

Associated with each function $f$ in the energy domain is its energy
measure $\mu_f$.  On the Sierpi\'nski gasket, these measures are singular
with respect to the standard self-similar measure
\cite{HinoNakahara2006,Kusuoka}.  Kusuoka introduced a distinguished
energy-dominant measure, now called the Kusuoka measure, with respect to
which every energy measure is absolutely continuous; see
\cite{Kigami2008,Kusuoka}.  More is true for harmonic functions: Hino
\cite{Hino2010} proved that the energy measures of any two
nonconstant harmonic functions on $K$ are mutually absolutely continuous.  Further
properties of harmonic energy measures are studied in
\cite{BellHoStrichartz2014,Hino2016}.  For the broader family of level-$k$
Sierpi\'nski gaskets $SG_k$ (with $SG_2=K$), related results on the Kusuoka
measure, the energy Laplacian, and non-degeneracy of the harmonic structure
appear in \cite{ObergTsougkas2019,Tsougkas2019}.  The present paper asks the
following quantitative question: how integrable is this density?

For a nonconstant harmonic function $h$, let $\mu_h$ denote its energy
measure and let
\[
 \nu_h=\frac{\mu_h}{\mu_h(K)}
\]
be its normalized version.  Given an ordered pair of
nonconstant harmonic functions $h_1,h_2$, we can define the level-$n$
\emph{density-ratio power sum}
\begin{equation}\label{eq:Smp}
 S_{h_1,h_2}(n,p)
 =\sum_{\abs{w}=n}
   \nu_{h_1}(F_w(K))^p\nu_{h_2}(F_w(K))^{1-p}.
\end{equation}
Proposition~\ref{prop:martingale-criterion} below shows that uniform
boundedness of $S_{h_1,h_2}(n,p)$ in $n$ implies, for $p>1$, that
\[
 \frac{d\nu_{h_1}}{d\nu_{h_2}}\in L^p(\nu_{h_2}).
\]
Strichartz and Tse \cite{StrichartzTse} proposed the following conjecture,
motivated by their numerical experiments, which is equivalent to the uniform boundedness of this sum in $n$.

\begin{conjecture}\label{conj:ST}
For nonconstant harmonic functions $h_1,h_2$ on
$K$, we have that $ \frac{d\nu_{h_1}}{d\nu_{h_2}}\in L^p(\nu_{h_2})$ for 
\[
 1<p<\frac{\log 15}{\log 9}.
\]
\end{conjecture}

Here and below, the \emph{harmonic direction} of a nonconstant harmonic
function is its equivalence class under transformations
$h\mapsto\alpha h+\beta$, where $\alpha,\beta\in\R$ and $\alpha\ne0$.

The exponent $p$ cannot be improved for all ordered pairs. Indeed for the ordered pair
whose harmonic directions are represented by the boundary values $(1,0,0)$
and $(0,-1,1)$, respectively, boundedness fails whenever
$p>\log 15/\log 9$.  Indeed, as noted in \cite{StrichartzTse} the single cell $F_0^n(K)$ satisfies
\[
 \nu_{h_1}(F_0^n(K))=\left(\frac35\right)^n,
 \qquad
 \nu_{h_2}(F_0^n(K))=\left(\frac1{15}\right)^n,
\]
so its contribution to \eqref{eq:Smp} is
\begin{equation}\label{eq:intro-zero-branch}
 \left(\frac{9^p}{15}\right)^n.
\end{equation}
Consequently, uniform boundedness fails for this ordered pair when
$p>\log 15/\log 9$.  At the endpoint the contribution
\eqref{eq:intro-zero-branch} equals $1$, so endpoint divergence for this pair
requires a finer argument which we supply in
Theorem~\ref{thm:sharp}.

\subsection{Main results and proof strategy}

We establish three main results concerning Conjecture~\ref{conj:ST}.

\begin{enumerate}
\item \emph{A universal subcritical range}
 (Theorem~\ref{thm:general}).  For every ordered pair of nonconstant
 harmonic functions $h_1,h_2$, the sequence $S_{h_1,h_2}(n,p)$ is bounded
 uniformly in $n$ for all
 \[
  1<p<\frac{\log(35/3)}{\log 9}.
 \]
\item \emph{The full conjectured range for a distinguished denominator
 harmonic direction}
 (Corollary~\ref{cor:boundary-denominator}).  If the harmonic direction of
 $h_2$ is represented by the boundary values $(0,-1,1)$, then, with no
 restriction on the nonconstant numerator $h_1$, boundedness holds for
 \[
  1 < p<\frac{\log 15}{\log 9}.
 \]
\item \emph{Sharpness for a particular ordered pair}
 (Theorem~\ref{thm:sharp}).  For the ordered pair of harmonic directions
 represented by the boundary values $(1,0,0)$ and $(0,-1,1)$, the threshold
 $p_*=\log 15/\log 9$ is exact.  At $p=p_*$, the density-ratio power sums
 grow at least linearly in $n$, while above $p_*$ they grow at least
 exponentially.
\end{enumerate}

To a nonconstant harmonic function
$h$ we associate a nonzero vector
$z_h\in\R^2$ such that
\[
 \nu_h(F_w(K))=
 \frac{\norm{K_wz_h}^2}{15^{\abs{w}}\norm{z_h}^2}
\]
for three explicit matrices $K_0,K_1,K_2$.
We then bound the successive increments of \eqref{eq:Smp} in terms of the
reciprocal sum
\[
 R_n(v)=\sum_{\abs{w}=n}\frac1{\norm{K_wv}^2},
\]
and obtain the universal bound
\[
 R_n(v)\le\frac1{\norm{v}^2}\left(\frac97\right)^n.
\]
When $v=e_2$, the lattice coding obtained from the integer conjugacy in
Lemma~\ref{lem:lattice} is injective on words of each fixed length.  The
resulting lattice-sum estimate gives $R_n(e_2)=O(n)$.  This polynomial bound
yields the full conjectured interval when the denominator harmonic direction
is represented by the boundary values $(0,-1,1)$.  Throughout, these
estimates control successive increments of the density-ratio power sums, and
we will also use the following elementary summability observation from Calculus.

\begin{lemma}\label{lem:increment-summability}
Let $(a_n)_{n\ge1}$ be a real sequence.
\begin{enumerate}
\item If
\[
 0\le a_{n+1}-a_n\le b_n
\]
for a nonnegative summable sequence $(b_n)_{n\ge1}$, then $(a_n)$ is
nondecreasing and bounded, and
\[
 \sup_{N\ge1}a_N\le a_1+\sum_{n=1}^{\infty}b_n.
\]
\item If
\[
 a_{n+1}-a_n\ge c
\]
for some constant $c>0$ and every $n\ge1$, then
\[
 a_N\ge a_1+c(N-1)
 \qquad(N\ge1).
\]
\end{enumerate}
\end{lemma}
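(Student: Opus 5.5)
Both parts follow from telescoping the increments, so the plan is to write $a_N$ as $a_1$ plus the sum of its first $N-1$ increments and then bound that sum from above or below.

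For part (1), the lower bound $a_{n+1}-a_n\ge0$ immediately gives that $(a_n)$ is nondecreasing. For boundedness we use the telescoping identity
\[
 a_N=a_1+\sum_{n=1}^{N-1}(a_{n+1}-a_n)\qquad(N\ge1).
\]
Each increment is at most $b_n$. The $b_n$ are nonnegative, so every partial sum $\sum_{n=1}^{N-1}b_n$ is at most the full series $\sum_{n=1}^{\infty}b_n$, which is finite by assumption. Hence $a_N\le a_1+\sum_{n\ge1}b_n$ for every $N$, and taking the supremum over $N$ gives the stated bound. The case $N=1$ is covered by the empty sum. Since a nondecreasing sequence that is bounded above converges, we also get that $\lim_N a_N$ exists and equals this supremum, although the statement only needs boundedness.

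For part (2), we use the same telescoping identity and bound each of the $N-1$ increments from below by $c$. This gives $a_N\ge a_1+c(N-1)$, again with the empty sum handling $N=1$. In applications, $a_n$ will be the density-ratio power sum $S_{h_1,h_2}(n,p)$. Part (1) then yields uniform boundedness in the subcritical range, and part (2) yields the linear growth at the endpoint $p_*$ in Theorem~\ref{thm:sharp}.

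There is no real obstacle here. The only point that needs care is the nonnegativity of the $b_n$, which is what lets every partial sum be dominated by the full series.
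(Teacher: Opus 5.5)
Your telescoping argument, writing $a_N=a_1+\sum_{n=1}^{N-1}(a_{n+1}-a_n)$ and bounding each increment by $b_n$ from above or by $c$ from below, is correct and complete. The paper gives no proof of this lemma, calling it an elementary observation from calculus, and your argument is the standard one it implicitly relies on.
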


\section{A two-dimensional formulation for harmonic functions}
\label{sec:harmonic-formulation}

\subsection{Boundary coordinates and cell-mass formulas}

Let $\mathcal E(\cdot,\cdot)$ be the standard resistance form and, as above,
write $\mathcal E(f)=\mathcal E(f,f)$.  For a nonconstant harmonic function
$h$, its unnormalized energy measure $\mu_h$ is characterized by
\begin{equation}\label{eq:energy-measure-definition}
 \int_{K}\varphi\,d\mu_h
 =\mathcal E(h,\varphi h)-\frac12\mathcal E(\varphi,h^2),
\end{equation}
for test functions $\varphi$ in the energy domain.  This fixes the convention
used here; see \cite{AzzamHallStrichartz}.  Taking
$\varphi=1$ in \eqref{eq:energy-measure-definition} gives
$\mu_h(K)=\mathcal E(h)$ and it also holds that
\[
 \mu_h(K)
 =\bigl(h(q_0)-h(q_1)\bigr)^2
  +\bigl(h(q_1)-h(q_2)\bigr)^2
 +\bigl(h(q_2)-h(q_0)\bigr)^2.
\]
The standard
local-energy characterization on a closed cell is
\begin{equation}\label{eq:local-energy-cell}
 \mu_h(F_w(K))=\mathcal E_{F_w(K)}(h)
 =\left(\frac53\right)^{\abs{w}}\mathcal E(h\circ F_w);
\end{equation}
see \cite{AzzamHallStrichartz}.  We use the normalized energy
measure
\[
 \nu_h=\frac{\mu_h}{\mu_h(K)},
 \qquad
 \nu_h(K)=1.
\]
See also \cite{Kigami} for the standard resistance-form and
harmonic-extension background used here.

The corresponding unnormalized sum satisfies
\[
 \sum_{\abs{w}=n}\mu_{h_1}(F_w(K))^p\mu_{h_2}(F_w(K))^{1-p}
 =
 \mu_{h_1}(K)^p\mu_{h_2}(K)^{1-p}S_{h_1,h_2}(n,p)
\]
thus normalization changes the sequence only by a positive factor independent
of $n$ and does not affect boundedness, monotonicity, or the set of exponents
for which the sequence is bounded.

Denote the three scalar boundary values of $h$ by
\[
 a=h(q_0),
 \qquad
 b=h(q_1),
 \qquad
 c=h(q_2).
\]
Define the scalars $x,y$ and the vector $z_h$ by
\begin{equation}\label{eq:harmonic-coordinates}
 x=a-\frac{b+c}{2},
 \qquad
 y=\frac{\sqrt3}{2}(c-b),
 \qquad
 z_h=\begin{pmatrix}x\\y\end{pmatrix}\in\R^2.
\end{equation}
Throughout, $\norm{\cdot}$ denotes the Euclidean norm on $\R^2$.
Adding the same constant to $a,b,c$ leaves both $x$ and $y$
unchanged and it can be easily seen that $h$ is
nonconstant exactly when $z_h\ne0$.

Accordingly, $x$ and $y$ determine the boundary-value triple up to the
addition of a common constant.  Explicitly, solving
\eqref{eq:harmonic-coordinates} gives
\[
 (a,b,c)=
 \left(
  \frac{2x}{3},
  -\frac{x}{3}-\frac{y}{\sqrt3},
  -\frac{x}{3}+\frac{y}{\sqrt3}
 \right)
 +t(1,1,1),
 \qquad t\in\R.
\]
In particular, the boundary differences are recovered directly
from $x$ and $y$:
\[
 a-b=x+\frac{y}{\sqrt3},
 \qquad
 b-c=-\frac{2y}{\sqrt3},
 \qquad
 c-a=\frac{y}{\sqrt3}-x.
\]
Substitution into the boundary energy gives
\begin{align}
\mu_h(K)
&=\left(x+\frac{y}{\sqrt3}\right)^2
 +\left(-\frac{2y}{\sqrt3}\right)^2
 +\left(\frac{y}{\sqrt3}-x\right)^2\notag\\
&=2x^2+2y^2
 =2\norm{z_h}^2.
\label{eq:total-energy-coordinate}
\end{align}

Define
\begin{equation*}
K_0=
\begin{pmatrix}3&0\\0&1\end{pmatrix},
\qquad
K_1=\frac12
\begin{pmatrix}3&\sqrt3\\ \sqrt3&5\end{pmatrix},
\qquad
K_2=\frac12
\begin{pmatrix}3&-\sqrt3\\ -\sqrt3&5\end{pmatrix}.
\end{equation*}
Each $K_i$ is symmetric, has trace $4$ and determinant $3$, and therefore has
eigenvalues $1$ and $3$.  Moreover,
\begin{equation}\label{eq:K-basic}
 \norm{z}\le\norm{K_i z}\le3\norm{z}
 \quad(z\in\R^2,\ i=0,1,2),
 \qquad
 \sum_{i=0}^2K_i^\transpose K_i=15I.
\end{equation}
We write the standard basis vectors of $\R^2$ as
\[
 e_1=\begin{pmatrix}1\\0\end{pmatrix},
 \qquad
 e_2=\begin{pmatrix}0\\1\end{pmatrix}.
\]
For a word $w=i_1\cdots i_n$, define
\begin{equation}\label{eq:word-matrix}
 K_w=K_{i_n}\cdots K_{i_1}.
\end{equation}
With this product convention, $K_{wi}=K_iK_w$.  A standard
three-dimensional matrix formalism for computing harmonic energy measures
appears in \cite{AzzamHallStrichartz}.  Here we instead develop
a two-dimensional formulation adapted to the estimates in this paper.

\begin{lemma}
Let $h$ be a nonconstant harmonic function.  For each $i\in\{0,1,2\}$,
\begin{equation}\label{eq:coordinate-recursion}
 z_{h\circ F_i}=\frac15K_i z_h.
\end{equation}
More generally, for every word $w$,
\begin{equation}\label{eq:word-coordinate-recursion}
 z_{h\circ F_w}=\frac1{5^{\abs{w}}}K_wz_h.
\end{equation}
Consequently,
\begin{equation}\label{eq:unnormalized-cell-formula}
 \mu_h(F_w(K))=\frac2{15^{\abs{w}}}\norm{K_wz_h}^2,
\end{equation}
and
\begin{equation}\label{eq:exact-cell-formula}
 \nu_h(F_w(K))=
 \frac{\norm{K_wz_h}^2}{15^{\abs{w}}\norm{z_h}^2}.
\end{equation}
\end{lemma}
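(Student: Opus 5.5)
The plan is to derive \eqref{eq:coordinate-recursion} directly from the harmonic extension algorithm (the ``$\tfrac15$--$\tfrac25$ rule''), then get \eqref{eq:word-coordinate-recursion} by induction on $\abs{w}$, and finally substitute into \eqref{eq:local-energy-cell} and \eqref{eq:total-energy-coordinate}.

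\emph{Step 1: the case $i=0$.} Write $h$ with boundary values $(a,b,c)$. Since $h\circ F_i$ is harmonic, $z_{h\circ F_i}$ is determined by its boundary values. For $i=0$ these are
\[
 a,\qquad \frac{2a+2b+c}{5},\qquad \frac{2a+b+2c}{5}.
\]
Plugging them into \eqref{eq:harmonic-coordinates} gives
\[
 x'=a-\frac{4a+3b+3c}{10}=\frac35x,
 \qquad
 y'=\frac{\sqrt3}{2}\cdot\frac{c-b}{5}=\frac15y.
\]
This is exactly $\tfrac15K_0z_h$.

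\emph{Step 2: the cases $i=1,2$.} I would not redo the computation but use the symmetry of $K$. The rotation $\rho$ of the triangle cyclically permuting $q_0\to q_1\to q_2$ conjugates $F_0$ to $F_1$ and $F_2$. In the coordinates $(x,y)$, the induced action on boundary-value triples modulo constants is a rotation $R$ of $\R^2$ by $\pm 2\pi/3$. This is the point of the $\sqrt3/2$ normalization: $(x,y)$ are the coordinates of $(a,b,c)$ projected onto the plane orthogonal to $(1,1,1)$, up to a common factor. Hence $K_1=RK_0R^{-1}$ and $K_2=R^{-1}K_0R$. I would confirm this with a one-line check that $R\,\mathrm{diag}(3,1)R^{\transpose}$ equals $\tfrac12\begin{pmatrix}3&\pm\sqrt3\\ \pm\sqrt3&5\end{pmatrix}$. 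The only real obstacle is bookkeeping here: fixing the orientation of $R$ and the sign of the off-diagonal entry so that it matches $K_1$ rather than $K_2$. As a fallback, a direct computation with $h\circ F_1$ having boundary values $\bigl(\tfrac{2a+2b+c}{5},\,b,\,\tfrac{a+2b+2c}{5}\bigr)$ settles it.

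\emph{Step 3: induction and the cell formulas.} For words, I would induct using
\[
 z_{h\circ F_{wi}}=z_{(h\circ F_w)\circ F_i}=\tfrac15K_i\,z_{h\circ F_w}.
\]
This is legitimate because $h\circ F_w$ is harmonic, and it is nonconstant because the $K_i$ are invertible. Combined with the convention $K_{wi}=K_iK_w$ from \eqref{eq:word-matrix}, this gives \eqref{eq:word-coordinate-recursion}. Then \eqref{eq:local-energy-cell} together with \eqref{eq:total-energy-coordinate} applied to $h\circ F_w$ yields
\[
 \mu_h(F_w(K))=\Bigl(\tfrac53\Bigr)^{\abs{w}}\cdot 2\,\norm{z_{h\circ F_w}}^2
 =\Bigl(\tfrac53\Bigr)^{\abs{w}}\frac{2\norm{K_wz_h}^2}{25^{\abs{w}}},
\]
which is \eqref{eq:unnormalized-cell-formula}. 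Dividing by $\mu_h(K)=2\norm{z_h}^2$ gives \eqref{eq:exact-cell-formula}.
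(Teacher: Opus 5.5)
Your proposal is correct and follows the paper's proof. Both obtain the boundary values of $h\circ F_i$ from the harmonic extension rule, iterate via $h\circ F_{wi}=(h\circ F_w)\circ F_i$ and $K_{wi}=K_iK_w$, and then combine \eqref{eq:local-energy-cell} with \eqref{eq:total-energy-coordinate}.

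The only difference is that you get $K_1,K_2$ by conjugating $K_0$ with the rotation from the cyclic symmetry, whereas the paper computes them directly. Your route works, and the orientation bookkeeping comes out as follows. Define $R$ by $z_{h\circ\rho}=Rz_h$, so the boundary triple $(a,b,c)$ becomes $(b,c,a)$; then $R$ is the counterclockwise rotation by $2\pi/3$. From $h\circ F_1=(h\circ\rho)\circ F_0\circ\rho^{-1}$ you get $K_1=R^{-1}K_0R$, and likewise $K_2=RK_0R^{-1}$, which also explains the eigenvector angles $0,\pm\pi/3$ used later in Lemma~\ref{lem:one-step}.
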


\begin{proof}
The harmonic extension rule gives
\begin{align*}
h(F_0q_1)=h(F_1q_0)&=\frac{2a+2b+c}{5},\\
h(F_0q_2)=h(F_2q_0)&=\frac{2a+b+2c}{5},\\
h(F_1q_2)=h(F_2q_1)&=\frac{a+2b+2c}{5}.
\end{align*}
Therefore the boundary triples of the three restricted functions are
\begin{align*}
h\circ F_0:\quad&
\left(a,\frac{2a+2b+c}{5},\frac{2a+b+2c}{5}\right),\\
h\circ F_1:\quad&
\left(\frac{2a+2b+c}{5},b,\frac{a+2b+2c}{5}\right),\\
h\circ F_2:\quad&
\left(\frac{2a+b+2c}{5},\frac{a+2b+2c}{5},c\right).
\end{align*}

We illustrate the coordinate calculation for $h \circ F_0$.  Its
first coordinate is
\begin{align*}
a-\frac12\left(\frac{2a+2b+c}{5}
                 +\frac{2a+b+2c}{5}\right)
&=\frac35\left(a-\frac{b+c}{2}\right)
 =\frac{3x}{5},
\end{align*}
while its second coordinate is
\[
 \frac{\sqrt3}{2}
 \left(\frac{2a+b+2c}{5}-\frac{2a+2b+c}{5}\right)
 =\frac{y}{5}.
\]
For $h\circ F_1$, the two coordinates simplify as follows:
\begin{align*}
\frac{2a+2b+c}{5}
-\frac12\left(b+\frac{a+2b+2c}{5}\right)
&=\frac{3(a-b)}{10}
 =\frac{3x+\sqrt3y}{10},\\
\frac{\sqrt3}{2}
\left(\frac{a+2b+2c}{5}-b\right)
&=\frac{\sqrt3(a-3b+2c)}{10}
 =\frac{\sqrt3x+5y}{10}.
\end{align*}
For $h\circ F_2$, they are
\begin{align*}
\frac{2a+b+2c}{5}
-\frac12\left(\frac{a+2b+2c}{5}+c\right)
&=\frac{3(a-c)}{10}
 =\frac{3x-\sqrt3y}{10},\\
\frac{\sqrt3}{2}
\left(c-\frac{a+2b+2c}{5}\right)
&=\frac{\sqrt3(3c-a-2b)}{10}
 =\frac{-\sqrt3x+5y}{10}.
\end{align*}
We have therefore obtained
\begin{align*}
z_{h\circ F_0}
&=\frac15\begin{pmatrix}3x\\y\end{pmatrix},\\
z_{h\circ F_1}
&=\frac1{10}
  \begin{pmatrix}3x+\sqrt3y\\ \sqrt3x+5y\end{pmatrix},\\
z_{h\circ F_2}
&=\frac1{10}
  \begin{pmatrix}3x-\sqrt3y\\ -\sqrt3x+5y\end{pmatrix}.
\end{align*}
These are exactly the three identities in \eqref{eq:coordinate-recursion}.
Iteration proves \eqref{eq:word-coordinate-recursion} with the product order
in \eqref{eq:word-matrix}.  By the local-energy identity
\eqref{eq:local-energy-cell},
\[
 \mu_h(F_w(K))
 =\left(\frac53\right)^{\abs{w}}\mathcal E(h\circ F_w)
 =\left(\frac53\right)^{\abs{w}}\mu_{h\circ F_w}(K).
\]
Using \eqref{eq:total-energy-coordinate} and
\eqref{eq:word-coordinate-recursion}, we obtain
\begin{align*}
\mu_h(F_w(K))
&=\left(\frac53\right)^{\abs{w}}
  2\norm{z_{h\circ F_w}}^2\\
&=\left(\frac53\right)^{\abs{w}}
  \frac2{5^{2\abs{w}}}\norm{K_wz_h}^2\\
&=\frac2{15^{\abs{w}}}\norm{K_wz_h}^2.
\end{align*}
This proves \eqref{eq:unnormalized-cell-formula}.  Dividing by
$\mu_h(K)=2\norm{z_h}^2$ proves \eqref{eq:exact-cell-formula}.
\end{proof}

We can also obtain the following already known standard estimate by using \eqref{eq:K-basic}. We get that
\begin{equation}\label{eq:cell-mass-bounds}
 15^{-\abs{w}}
 \le \nu_h(F_w(K))
 \le \left(\frac35\right)^{\abs{w}}
\end{equation}
for every nonconstant harmonic function $h$ and every word $w$.  Indeed,
successive application of \eqref{eq:K-basic} gives
\[
 \norm{z_h}\le\norm{K_wz_h}
 \le3^{\abs{w}}\norm{z_h},
\]
and the assertion follows from \eqref{eq:exact-cell-formula}.

Adding a constant to $h$ leaves $z_h$ unchanged, and multiplying $h$ by a
nonzero scalar rescales $z_h$ but does not change the normalized energy
measure.  

The following Proposition can be obtained and we omit the proof as it follows from the discussion in \cite{StrichartzTse}.
\begin{proposition}
\label{prop:martingale-criterion}
Let $h_1,h_2$ be nonconstant harmonic functions and let $p>1$.  Then
$S_{h_1,h_2}(n,p)$ is nondecreasing in $n$, and the following statements are
equivalent:
\begin{enumerate}
\item $\sup_{n\ge1}S_{h_1,h_2}(n,p)<\infty$;
\item $\nu_{h_1}\ll\nu_{h_2}$ and
$\frac{d\nu_{h_1}}{d\nu_{h_2}}\in L^p(\nu_{h_2})$.
\end{enumerate}
\end{proposition}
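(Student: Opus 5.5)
The plan is to run a martingale argument on the level-$n$ cell filtration. For each $n$, let $\mathcal F_n$ be the $\sigma$-algebra generated by the level-$n$ cells. Distinct cells of the same level meet in at most finitely many junction points. These points carry no $\nu_h$-mass, since by \eqref{eq:cell-mass-bounds} any point lies in cells of mass at most $(3/5)^n\to0$, so $\nu_h$ has no atoms. Hence $\mathcal F_n$ is, up to null sets, a finite partition for both measures. Because $\nu_{h_2}(F_w(K))\ge 15^{-n}>0$ for every cell, the function
\[
 X_n=\sum_{\abs{w}=n}\frac{\nu_{h_1}(F_w(K))}{\nu_{h_2}(F_w(K))}\mathbf 1_{F_w(K)}
\]
is well defined $\nu_{h_2}$-a.e. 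Additivity of $\nu_{h_1}$ over the three children $F_{wi}(K)$ shows that $(X_n)$ is a nonnegative $\nu_{h_2}$-martingale with respect to $(\mathcal F_n)$. By construction,
\[
 S_{h_1,h_2}(n,p)=\int X_n^p\,d\nu_{h_2}.
\]

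Monotonicity in $n$ then follows at once. The map $t\mapsto t^p$ is convex for $p>1$, so conditional Jensen gives $\int X_n^p\,d\nu_{h_2}\le\int X_{n+1}^p\,d\nu_{h_2}$. Equivalently, one can apply Hölder cell by cell:
\[
 \nu_{h_1}(C)^p\nu_{h_2}(C)^{1-p}\le\sum_i \nu_{h_1}(C_i)^p\nu_{h_2}(C_i)^{1-p}
\]
for a cell $C$ with children $C_i$.

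For (1)$\Rightarrow$(2), assume the sequence is bounded. Then $(X_n)$ is bounded in $L^p(\nu_{h_2})$ with $p>1$, so Doob's theorem gives convergence of $X_n$ a.e. and in $L^p$ to some $X\in L^p(\nu_{h_2})$, with $X_n=E[X\mid\mathcal F_n]$. Consequently $\int_C X\,d\nu_{h_2}=\nu_{h_1}(C)$ for every cell $C$. The cells, together with null sets, form a $\pi$-system generating the Borel $\sigma$-algebra, because cell diameters tend to $0$. The $\pi$–$\lambda$ theorem therefore gives $\nu_{h_1}=X\,\nu_{h_2}$, i.e.\ $\nu_{h_1}\ll\nu_{h_2}$ with density $X\in L^p$.

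For (2)$\Rightarrow$(1), let $g=d\nu_{h_1}/d\nu_{h_2}$. Then $X_n=E[g\mid\mathcal F_n]$, and conditional Jensen yields $S_{h_1,h_2}(n,p)\le\int g^p\,d\nu_{h_2}<\infty$ uniformly in $n$.

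The main technical point is the measure-theoretic bookkeeping: the cells overlap at junction points, so one must check that these points are null for both measures. This is handled by the atomlessness noted in the first step. Beyond that, everything reduces to the standard $L^p$ martingale convergence theorem.
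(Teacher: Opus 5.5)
Your proof is correct and supplies the standard martingale argument on the cell filtration that the paper omits (the paper defers to the discussion of Strichartz and Tse). Your cell-by-cell Jensen/H\"older route to monotonicity is also how the paper proves it, in Proposition~\ref{prop:master-increment}, via \eqref{eq:exact-increment} and Lemma~\ref{lem:one-cell-comparison}.

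One wording point in $(1)\Rightarrow(2)$ should be tightened. The null sets you adjoin to the cells to form a $\pi$-system must be null for \emph{both} measures, for instance finite sets of junction points, which your atomlessness observation covers. Adjoining arbitrary $\nu_{h_2}$-null sets would already presuppose the conclusion $\nu_{h_1}\ll\nu_{h_2}$.
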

The goal of this paper is to study the sum in the first statement of the above proposition.

\section{The common increment estimate}\label{sec:increment}

The following quantity will play a crucial role in our results:
\begin{equation*}
 R_n(v)=\sum_{\abs{w}=n}\frac1{\norm{K_wv}^2},
 \qquad n\ge1,\quad v\in\R^2\setminus\{0\}.
\end{equation*}
This sum is well defined and finite because there are $3^n$ words of length
$n$ and every $K_w$ is invertible.  We first isolate an estimate comparing
the two refinements inside a single cell.  In the following, $\det(u,v)$
denotes the determinant of the $2\times2$ matrix whose columns are the
vectors $u$ and $v$.

\begin{lemma}\label{lem:one-cell-comparison}
Let $u,v\in\R^2\setminus\{0\}$, let $p>1$, and define the scalars
\[
 P_i=\frac{\norm{K_i u}^2}{15\norm{u}^2},
 \qquad
 Q_i=\frac{\norm{K_i v}^2}{15\norm{v}^2},
 \qquad i=0,1,2.
\]
Then
\[
 \sum_{i=0}^2P_i=\sum_{i=0}^2Q_i=1,
 \qquad
 \frac1{15}\le P_i,Q_i\le\frac35,
\]
and there exists a constant $C_p>0$, depending only on $p$, such that
\begin{equation}\label{eq:one-cell-comparison}
 0\le \sum_{i=0}^2P_i^pQ_i^{1-p}-1
 \le C_p\frac{\det(u,v)^2}{\norm{u}^2\norm{v}^2}.
\end{equation}
\end{lemma}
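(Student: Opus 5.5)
The first two claims come straight from \eqref{eq:K-basic}. Since $\sum_i K_i^\transpose K_i=15I$, we get $\sum_i\norm{K_iu}^2=u^\transpose(15I)u=15\norm{u}^2$, so $\sum_iP_i=1$, and likewise $\sum_iQ_i=1$. The bounds $\norm{u}\le\norm{K_iu}\le3\norm{u}$ then give $1/15\le P_i\le 9/15=3/5$, and the same for $Q_i$.

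For the lower bound in \eqref{eq:one-cell-comparison}, write $\sum_iP_i^pQ_i^{1-p}=\sum_iQ_i(P_i/Q_i)^p$. The function $t\mapsto t^p$ is convex for $p>1$, so Jensen's inequality with weights $Q_i$ gives
\[
\sum_i Q_i\left(\frac{P_i}{Q_i}\right)^p\ \ge\ \left(\sum_iP_i\right)^p=1 .
\]

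For the upper bound, set $r_i=P_i/Q_i$. By the bounds above, $r_i\in[1/9,9]$. On this compact interval, Taylor's theorem gives a constant $A_p$, depending only on $p$, with
\[
t^p\le 1+p(t-1)+A_p(t-1)^2 .
\]
Multiply by $Q_i$ and sum. The linear term vanishes because $\sum_iQ_i(r_i-1)=\sum_iP_i-\sum_iQ_i=0$. This leaves
\[
\sum_iP_i^pQ_i^{1-p}-1\ \le\ A_p\sum_iQ_i(r_i-1)^2\ \le\ 15A_p\sum_i(P_i-Q_i)^2,
\]
where the last step uses $Q_i\ge1/15$. It therefore suffices to show $\abs{P_i-Q_i}\le C\,\abs{\det(u,v)}/(\norm{u}\norm{v})$.

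This reduction is the main step. By homogeneity we may take $\norm{u}=\norm{v}=1$. Write $g_i(z)=\norm{K_iz}^2/15=z^\transpose(K_i^2/15)z$, a quadratic form on the unit circle. It is even, $g_i(-z)=g_i(z)$, and Lipschitz with constant at most $2\norm{K_i^2}/15=18/15$. Hence
\[
\abs{P_i-Q_i}\le \tfrac{18}{15}\min\bigl(\norm{u-v},\norm{u+v}\bigr).
\]
For unit vectors at angle $\theta$ we have $\min(\norm{u-v},\norm{u+v})=2\min(\abs{\sin(\theta/2)},\abs{\cos(\theta/2)})$ and $\abs{\det(u,v)}=\abs{\sin\theta}=2\abs{\sin(\theta/2)\cos(\theta/2)}$. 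Since $\max(\abs{\sin(\theta/2)},\abs{\cos(\theta/2)})\ge1/\sqrt2$, it follows that $\min(\norm{u-v},\norm{u+v})\le\sqrt2\,\abs{\det(u,v)}$.

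Squaring and summing over $i$ gives
\[
\sum_i(P_i-Q_i)^2\le C\,\frac{\det(u,v)^2}{\norm{u}^2\norm{v}^2}
\]
with an absolute constant $C$. Combined with the reduction above, this yields \eqref{eq:one-cell-comparison} with $C_p=15A_pC$. The only delicate point is using the evenness of $g_i$, so that both $u\approx v$ and $u\approx -v$ are handled by a single determinant bound.
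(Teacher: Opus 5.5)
Your proof is correct and follows the paper's outline. You get the sum and range identities from $\sum_iK_i^\transpose K_i=15I$ and the singular-value bounds, use Jensen for the lower bound, and use a second-order Taylor estimate to reduce the upper bound to $\sum_i(P_i-Q_i)^2$, which you then control by $\det(u,v)^2/(\norm{u}^2\norm{v}^2)$. The differences are minor: you expand $t^p$ about $1$ in the ratio $P_i/Q_i$ rather than $t^pq^{1-p}$ about $t=q$, and you bound $\abs{P_i-Q_i}$ via a Lipschitz estimate for the even quadratic form $z\mapsto\norm{K_iz}^2/15$ instead of the paper's explicit eigenvector identity $\abs{P_i-Q_i}=\tfrac{8}{15}\abs{\sin(\varphi+\psi-2\gamma_i)}\abs{\sin(\varphi-\psi)}$, which only costs a worse absolute constant.
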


\begin{proof}
Using \eqref{eq:K-basic}, we first obtain
\begin{align*}
 \sum_{i=0}^2P_i
 &=\frac{u^\transpose
   \left(\sum_{i=0}^2K_i^\transpose K_i\right)u}
  {15\norm{u}^2}
 =\frac{15\norm{u}^2}{15\norm{u}^2}=1.
\end{align*}
The same calculation gives $\sum_iQ_i=1$.  The bounds in
\eqref{eq:K-basic} give
\[
 \norm{u}^2\le\norm{K_i u}^2\le9\norm{u}^2,
 \qquad
 \norm{v}^2\le\norm{K_i v}^2\le9\norm{v}^2.
\]
Dividing by the corresponding denominators proves
\[
 \frac1{15}\le P_i,Q_i\le\frac35.
\]

The lower bound in \eqref{eq:one-cell-comparison} follows from weighted
Jensen's inequality.  Since the positive numbers $Q_i$ sum to $1$ and
$t\mapsto t^p$ is strictly convex for $p>1$,
\begin{align*}
 \sum_iP_i^pQ_i^{1-p}
 &=\sum_iQ_i\left(\frac{P_i}{Q_i}\right)^p\\
 &\ge
 \left(\sum_iQ_i\frac{P_i}{Q_i}\right)^p
 =\left(\sum_iP_i\right)^p=1.
\end{align*}
Equality holds exactly when all three ratios $P_i/Q_i$ are equal, which,
because both quantities sum to $1$, is equivalent to $P_i=Q_i$ for every
$i$.

For the upper bound, define $f_q(t)=t^pq^{1-p}$.  Then
\[
 f_q(q)=q,\qquad f_q'(q)=p,\qquad
 f_q''(t)=p(p-1)t^{p-2}q^{1-p}.
\]
For each fixed $p>1$, the second derivative is bounded uniformly for
$t,q\in[1/15,3/5]$.  Taking $t=P_i$ and $q=Q_i$, a second-order Taylor
expansion at $t=q$ therefore gives
\[
 P_i^pQ_i^{1-p}
 \le Q_i+p(P_i-Q_i)+C_p(P_i-Q_i)^2.
\]
After summing over $i$, the linear terms cancel because
\[
 \sum_i(P_i-Q_i)=0.
\]
Together with the Jensen lower bound, this proves
\begin{equation}\label{eq:probability-gap-bound}
 0\le\sum_iP_i^pQ_i^{1-p}-1
 \le C_p\sum_i(P_i-Q_i)^2.
\end{equation}

Choose polar angles $\varphi,\psi\in\R$ such that
\[
 \frac{u}{\norm{u}}=(\cos\varphi,\sin\varphi)^\transpose,
 \qquad
 \frac{v}{\norm{v}}=(\cos\psi,\sin\psi)^\transpose,
\]
and let $\theta\in[0,\pi/2]$ be the smaller angle between the lines spanned by
$u$ and $v$.  Then
\[
 \sin\theta=\abs{\sin(\varphi-\psi)}.
\]
For each $i$, choose an angle $\gamma_i\in\R$ such that
\[
 v_i^+=\bigl(\cos\gamma_i,\sin\gamma_i\bigr)^\transpose
\]
is a unit eigenvector of $K_i$ with eigenvalue $3$.  Because $K_i$ is
symmetric and its eigenvalues are distinct, the perpendicular unit vector
\[
 v_i^-=\bigl(-\sin\gamma_i,\cos\gamma_i\bigr)^\transpose
\]
is an eigenvector with eigenvalue $1$.  The angle-addition formulas give the
orthogonal decomposition
\[
 \frac{u}{\norm{u}}
 =\cos(\varphi-\gamma_i)v_i^+
  +\sin(\varphi-\gamma_i)v_i^-.
\]
Applying $K_i$ gives
\[
 \frac{K_i u}{\norm{u}}
 =3\cos(\varphi-\gamma_i)v_i^+
  +\sin(\varphi-\gamma_i)v_i^-.
\]
It follows that
\begin{align*}
 \frac{\norm{K_i u}^2}{\norm{u}^2}
 &=9\cos^2(\varphi-\gamma_i)
   +\sin^2(\varphi-\gamma_i)\\
 &=1+8\cos^2(\varphi-\gamma_i).
\end{align*}
The analogous formula for $v$ contains $\psi$ in place of $\varphi$.  Using
\[
 \cos^2\xi-\cos^2\eta=-\sin(\xi+\eta)\sin(\xi-\eta)
\]
with $\xi=\varphi-\gamma_i$ and $\eta=\psi-\gamma_i$, we obtain
\begin{align*}
 \abs{P_i-Q_i}
 &=\frac8{15}
   \abs{\sin(\varphi+\psi-2\gamma_i)}
   \abs{\sin(\varphi-\psi)}\\
 &\le\frac8{15}\sin\theta.
\end{align*}
Squaring and summing over $i$ gives
\[
 \sum_{i=0}^2(P_i-Q_i)^2
 \le\frac{64}{75}\sin^2\theta.
\]
Finally, the planar area formula gives
\[
 \sin^2\theta=\frac{\det(u,v)^2}{\norm{u}^2\norm{v}^2}.
\]
Combining the last two displays with \eqref{eq:probability-gap-bound} and
absorbing the fixed factor $64/75$ into $C_p$ proves the upper bound in
\eqref{eq:one-cell-comparison}.
\end{proof}

We now apply Lemma~\ref{lem:one-cell-comparison} simultaneously to all cells
at a fixed level.

\begin{proposition}\label{prop:master-increment}
Let $h_1,h_2$ be nonconstant harmonic functions and let $p>1$.  Then
$S_{h_1,h_2}(n,p)$ is nondecreasing for $n\ge1$, and
\begin{equation}\label{eq:master-increment}
0\le S_{h_1,h_2}(n+1,p)-S_{h_1,h_2}(n,p)
\le C_{p,h_1,h_2}
\left(\frac{9^p}{15}\right)^nR_n(z_{h_2})
\end{equation}
for every $n\ge1$, where $C_{p,h_1,h_2}>0$ depends on $p,h_1,h_2$ but is
independent of $n$.
\end{proposition}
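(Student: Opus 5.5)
Write $u=z_{h_1}$, $v=z_{h_2}$, and for a word $w$ of length $n$ put $u_w=K_wu$, $v_w=K_wv$. The plan is to split $S_{h_1,h_2}(n+1,p)$ cell by cell over the level-$n$ cells, apply Lemma~\ref{lem:one-cell-comparison} to each cell, and then control the resulting determinants by unimodularity. By \eqref{eq:exact-cell-formula} and $K_{wi}=K_iK_w$,
\[
 \nu_{h_1}(F_{wi}(K))=\nu_{h_1}(F_w(K))\,P_i^{(w)},
 \qquad
 P_i^{(w)}=\frac{\norm{K_iu_w}^2}{15\norm{u_w}^2},
\]
and similarly $\nu_{h_2}(F_{wi}(K))=\nu_{h_2}(F_w(K))\,Q_i^{(w)}$ with $v_w$ in place of $u_w$. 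Hence
\[
 S_{h_1,h_2}(n+1,p)-S_{h_1,h_2}(n,p)
 =\sum_{\abs{w}=n}\nu_{h_1}(F_w(K))^p\nu_{h_2}(F_w(K))^{1-p}
 \Bigl(\sum_{i=0}^2 (P_i^{(w)})^p(Q_i^{(w)})^{1-p}-1\Bigr).
\]
Lemma~\ref{lem:one-cell-comparison}, applied with $u_w,v_w$ (both nonzero since $K_w$ is invertible), shows that each bracket lies in $[0,\,C_p\det(u_w,v_w)^2/(\norm{u_w}^2\norm{v_w}^2)]$. The lower bound gives monotonicity.

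For the upper bound, use $\det(K_wu,K_wv)=\det(K_w)\det(u,v)=3^n\det(u,v)$, since each $K_i$ has determinant $3$. Substituting the cell formulas then makes each term equal to
\[
 \frac{\norm{u_w}^{2p}\norm{v_w}^{2-2p}}{15^n\norm{u}^{2p}\norm{v}^{2-2p}}\cdot
 \frac{C_p\,9^n\det(u,v)^2}{\norm{u_w}^2\norm{v_w}^2}
 =C\,\frac{9^n}{15^n}\,\norm{u_w}^{2p-2}\,\norm{v_w}^{-2p},
\]
where $C=C_p\det(u,v)^2\norm{u}^{-2p}\norm{v}^{2p-2}$ is independent of $n$. (If $\det(u,v)=0$ the increment is zero and there is nothing to prove.)

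The remaining task, which is the main obstacle, is to trade $\norm{u_w}^{2p-2}\norm{v_w}^{-2p}$ for $9^{(p-1)n}\norm{v_w}^{-2}$. Since $2p-2>0$, the crude bound $\norm{u_w}\le 3^n\norm{u}$ from \eqref{eq:K-basic} gives $\norm{u_w}^{2p-2}\le 9^{(p-1)n}\norm{u}^{2p-2}$. For the other factor, $\norm{v_w}^{-2p}=\norm{v_w}^{-2}\norm{v_w}^{2-2p}\le\norm{v_w}^{-2}\norm{v}^{2-2p}$, because $\norm{v_w}\ge\norm{v}$ and $2-2p<0$. Combining these, the increment is at most
\[
 C'\Bigl(\frac{9}{15}\Bigr)^n 9^{(p-1)n}\sum_{\abs{w}=n}\frac1{\norm{K_wv}^2}
 =C'\Bigl(\frac{9^p}{15}\Bigr)^n R_n(z_{h_2}),
\]
which is \eqref{eq:master-increment}. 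The key point is that the factor $3^n$ coming from the determinant is exactly what upgrades $(9/15)^n$ to $(9^p/15)^n$. This crude bound is acceptable here, since the later estimates on $R_n$ supply the needed decay.
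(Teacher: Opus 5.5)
Your proof is correct and follows the paper's argument essentially step for step. It uses the same exact increment formula over level-$n$ cells, Lemma~\ref{lem:one-cell-comparison} applied to $K_wz_{h_1},K_wz_{h_2}$, the identity $\det(K_w)=3^n$, and the crude bounds $\norm{K_wz_{h_1}}\le3^n\norm{z_{h_1}}$ and $\norm{K_wz_{h_2}}\ge\norm{z_{h_2}}$. The paper applies these bounds to the ratio raised to the power $p-1$, while you apply them to the two factors separately, which amounts to the same thing.

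There are two cosmetic slips only. First, each term is bounded by your displayed product, not equal to it. Second, in your closing remark, the determinant's $9^n$ accounts only for the $(9/15)^n$; it is the crude bound on $\norm{u_w}^{2p-2}$ that supplies the extra $9^{(p-1)n}$.
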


\begin{proof}
Fix $n\ge1$.  For every word $w$ of length $n$, define the refinement
proportions
\[
 P_i=\frac{\nu_{h_1}(F_{wi}(K))}{\nu_{h_1}(F_w(K))}
 =\frac{\norm{K_iK_wz_{h_1}}^2}{15\norm{K_wz_{h_1}}^2},
 \qquad
 Q_i=\frac{\nu_{h_2}(F_{wi}(K))}{\nu_{h_2}(F_w(K))}
 =\frac{\norm{K_iK_wz_{h_2}}^2}{15\norm{K_wz_{h_2}}^2},
 \qquad i=0,1,2.
\]
The ratios are well defined by \eqref{eq:cell-mass-bounds}, and
Lemma~\ref{lem:one-cell-comparison} applies to
$K_wz_{h_1}$ and $K_wz_{h_2}$.  Since every word of length $n+1$ is uniquely
of the form $wi$ and
\[
 \nu_{h_1}(F_{wi}(K))=\nu_{h_1}(F_w(K))P_i,
 \qquad
 \nu_{h_2}(F_{wi}(K))=\nu_{h_2}(F_w(K))Q_i,
\]
summing over $w$ and $i$ and subtracting the level-$n$ sum gives
\begin{align}
 &S_{h_1,h_2}(n+1,p)-S_{h_1,h_2}(n,p)\notag\\
 &\qquad=
 \sum_{\abs{w}=n}\nu_{h_1}(F_w(K))^p\nu_{h_2}(F_w(K))^{1-p}
 \left[
  \sum_{i=0}^2P_i^pQ_i^{1-p}-1
 \right].
 \label{eq:exact-increment}
\end{align}
The bracket is nonnegative by Lemma~\ref{lem:one-cell-comparison}, and every
factor outside it is positive.  Thus the increment is nonnegative, which
proves monotonicity.

We now use the upper bound from the same lemma.  Since every $K_i$ has
determinant $3$,
\begin{align*}
 \det(K_wz_{h_1},K_wz_{h_2})
 &=\det(K_w)\det(z_{h_1},z_{h_2}),\\
 \det(K_w)&=3^n.
\end{align*}
Consequently,
\begin{align*}
 0&\le\sum_iP_i^pQ_i^{1-p}-1\\
 &\le C_p
 \frac{9^n\det(z_{h_1},z_{h_2})^2}
 {\norm{K_wz_{h_1}}^2\norm{K_wz_{h_2}}^2}\\
 &\le C_{p,h_1,h_2}
 \frac{9^n}{\norm{K_wz_{h_1}}^2\norm{K_wz_{h_2}}^2},
\end{align*}
where the fixed determinant has been absorbed into the last constant.

By \eqref{eq:exact-cell-formula},
\[
 \nu_{h_1}(F_w(K))^p\nu_{h_2}(F_w(K))^{1-p}
 =\frac1{15^n}
 \frac{\norm{K_wz_{h_1}}^{2p}}{\norm{z_{h_1}}^{2p}}
 \frac{\norm{z_{h_2}}^{2p-2}}
 {\norm{K_wz_{h_2}}^{2p-2}}.
\]
Substituting this identity and the preceding estimate into
\eqref{eq:exact-increment} and noting that
$9^n/15^n=(3/5)^n$ gives
\begin{align*}
&S_{h_1,h_2}(n+1,p)-S_{h_1,h_2}(n,p)\\
&\quad\le
C_{p,h_1,h_2}\left(\frac35\right)^n
\sum_{\abs{w}=n}
\left(\frac{\norm{K_wz_{h_1}}^2}
{\norm{K_wz_{h_2}}^2}\right)^{p-1}
\frac1{\norm{K_wz_{h_2}}^2},
\end{align*}
where the fixed powers of $\norm{z_{h_1}}$ and $\norm{z_{h_2}}$ have also
been absorbed into the constant.

Applying the bounds in \eqref{eq:K-basic} successively to
the $n$ factors in $K_w$ gives
\[
 \norm{K_wz_{h_1}}\le3^n\norm{z_{h_1}},
 \qquad
 \norm{K_wz_{h_2}}\ge\norm{z_{h_2}},
\]
and thus
\[
 \frac{\norm{K_wz_{h_1}}^2}{\norm{K_wz_{h_2}}^2}
 \le9^n\frac{\norm{z_{h_1}}^2}{\norm{z_{h_2}}^2}.
\]
Since $p>1$,
\[
 \left(\frac{\norm{K_wz_{h_1}}^2}{\norm{K_wz_{h_2}}^2}\right)^{p-1}
 \le C_{p,h_1,h_2}9^{n(p-1)}.
\]
Moreover,
\[
 \left(\frac35\right)^n9^{n(p-1)}
 =\left(\frac{9^p}{15}\right)^n,
\]
and the remaining word sum is precisely $R_n(z_{h_2})$.  This proves
\eqref{eq:master-increment}.
\end{proof}

\section{Arbitrary harmonic functions: the universal interval}
\label{sec:general-pair}

To apply Proposition~\ref{prop:master-increment}, we first obtain a
one-step bound for $R_n$.

\begin{lemma}\label{lem:one-step}
For the unit vector $u_\theta=(\cos\theta,\sin\theta)^\transpose$,
\begin{equation*}
 \sum_{i=0}^2\frac1{\norm{K_i u_\theta}^2}
 =\frac{63}{65+16\cos(6\theta)}
 \le\frac97.
\end{equation*}
\end{lemma}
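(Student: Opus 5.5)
The plan is to compute the three quantities $\norm{K_iu_\theta}^2$ in closed form from the spectral picture already used in the proof of Lemma~\ref{lem:one-cell-comparison}, and then sum the reciprocals. From that proof, if $\gamma_i$ is the polar angle of the unit eigenvector of $K_i$ for the eigenvalue $3$, then
\[
 \norm{K_iu_\theta}^2=1+8\cos^2(\theta-\gamma_i)=5+4\cos\bigl(2\theta-2\gamma_i\bigr).
\]
First I will identify the eigen-angles explicitly.
\begin{itemize}
\item For $K_0$ we have $\gamma_0=0$.
\item For $K_1$, the equation $(K_1-3I)v=0$ reads $-3\cos\gamma+\sqrt3\sin\gamma=0$. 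So $\tan\gamma_1=\sqrt3$ and $\gamma_1=\pi/3$.
\item By the symmetry $K_2=\mathrm{diag}(1,-1)K_1\,\mathrm{diag}(1,-1)$ we get $\gamma_2=-\pi/3$.
\end{itemize}
Hence the angles $2\gamma_i$ are $0,\pm2\pi/3$. With $\phi=2\theta$ the sum becomes
\[
 \sum_{k=0}^{2}\frac1{5+4\cos(\phi+2\pi k/3)}.
\]

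Next I evaluate this threefold rotational average. The cleanest route is the Poisson-kernel expansion
\[
 \frac1{5+4\cos x}=\frac13\Bigl(1+2\sum_{m\ge1}\bigl(-\tfrac12\bigr)^m\cos mx\Bigr),
\]
which holds because $\sqrt{25-16}=3$ and $\rho=(5-3)/4=1/2$. Averaging over the three shifts by $2\pi/3$ kills every Fourier mode with $3\nmid m$ and keeps the modes $m=3j$ unchanged. This leaves
\[
 1+2\sum_{j\ge1}\bigl(-\tfrac18\bigr)^j\cos(3j\phi).
\]
Summing that Poisson series back up with ratio $-1/8$ gives
\[
 \frac{1-1/64}{1+\tfrac14\cos 3\phi+1/64}=\frac{63}{65+16\cos(6\theta)}.
\]

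An alternative, if one prefers to avoid series, is purely algebraic. Put the three fractions over a common denominator and use the identities
\[
 \sum_k\cos(\phi+2\pi k/3)=0,\qquad \sum_k\cos^2(\phi+2\pi k/3)=\tfrac32,\qquad \prod_k\cos(\phi+2\pi k/3)=\tfrac14\cos3\phi .
\]
The numerator is then $75$ and the denominator is $125-60+16\cos3\phi$, after cancelling a common factor. I would cross-check the constant and the sign at $\theta=0$. There $\norm{K_0e_1}^2=9$ and $\norm{K_1e_1}^2=\norm{K_2e_1}^2=3$, so the sum is $1/9+2/3=7/9=63/81$, which is consistent with the $+16\cos(6\theta)$ sign.

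Finally, the bound follows because the denominator is at least $65-16=49$, giving the maximum $63/49=9/7$, attained when $\cos6\theta=-1$. I expect the main obstacle to be only the bookkeeping of the sign and the factor in the trigonometric identity for the rotational average. I would settle this by the $\theta=0$ check and a second check at $\theta=\pi/2$. At $\theta=\pi/2$ the values are $\norm{K_0e_2}^2=1$ and $\norm{K_ie_2}^2=7$ for $i=1,2$, so the sum is $1+2/7=9/7$, which matches $63/49$.
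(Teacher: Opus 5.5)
Your argument is correct. It follows the paper up to the point of evaluating the threefold sum: the eigen-angles $0,\pm\pi/3$ and the formula $\norm{K_iu_\theta}^2=5+4\cos(2\theta-2\gamma_i)$ match the paper exactly. The difference is how $\sum_k 1/(5+4\cos(\phi+2\pi k/3))$ is computed.

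The paper takes what you call your alternative route. With $d_j=5+4\cos(x+2\pi j/3)$, it uses the three-angle identities to get $d_0d_1+d_1d_2+d_2d_0=63$ and $d_0d_1d_2=65+16\cos3x$, then divides.

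Your main Poisson-kernel route is fully rigorous, since the series converge absolutely for $\abs{\rho}=1/2<1$. It works by expanding with $\rho=-1/2$, letting the cyclic shifts kill every mode with $3\nmid m$, and resumming with ratio $(-1/2)^3=-1/8$. This route also explains the shape of the answer. Only $\cos6\theta$ can survive, and the constants $63=64-1$ and $65=64+1$ come from $1\mp(1/8)^2$ after clearing denominators. The paper's finite symmetric-function computation is more elementary and avoids series.

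Your sketch of the algebraic alternative does contain an arithmetic slip: the numerator is not $75$. Writing $c_k=\cos(\phi+2\pi k/3)$, and using
$\sum_{j<k}c_jc_k=\frac12\bigl((\sum_kc_k)^2-\sum_kc_k^2\bigr)=-\frac34$, one gets
\[
 \sum_{j<k}d_jd_k=75+40\sum_kc_k+16\sum_{j<k}c_jc_k=75+0-12=63.
\]
You appear to have kept only the constant terms $3\cdot25$. There is also no common factor to cancel: $125-60+16\cos3\phi$ is simply $d_0d_1d_2$. Your own check at $\theta=0$ would catch this, since $7/9=63/81\neq75/81$.
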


\begin{proof}
A unit eigenvector of $K_0$ for the eigenvalue $3$ is $e_1$, whose polar
angle is $0$.  Direct multiplication shows
\[
 K_1(1,\sqrt3)^\transpose=3(1,\sqrt3)^\transpose,
 \qquad
 K_2(1,-\sqrt3)^\transpose=3(1,-\sqrt3)^\transpose.
\]
Thus unit eigenvectors of $K_1$ and $K_2$ for the eigenvalue $3$ may be
chosen with polar angles $\pi/3$ and $-\pi/3$, respectively.  In each case,
the perpendicular unit vector is an eigenvector with eigenvalue $1$.  If
$\gamma$ denotes the chosen angle of the eigenvector for eigenvalue $3$,
decomposing $u_\theta$ in this orthonormal eigenbasis gives
\begin{align*}
\norm{K_i u_\theta}^2
&=9\cos^2(\theta-\gamma)+\sin^2(\theta-\gamma)\\
&=1+8\cos^2(\theta-\gamma)\\
&=5+4\cos(2\theta-2\gamma).
\end{align*}
Consequently,
\begin{align*}
\norm{K_0u_\theta}^2&=5+4\cos(2\theta),\\
\norm{K_1u_\theta}^2&=5+4\cos(2\theta-2\pi/3),\\
\norm{K_2u_\theta}^2&=5+4\cos(2\theta+2\pi/3).
\end{align*}
Put $x=2\theta$ and
\[
 d_j=5+4\cos\left(x+\frac{2\pi j}{3}\right),
 \qquad j=0,1,2.
\]
The three squared norms above are $d_0,d_1,d_2$ in some order.  The standard
three-angle identities give
\[
 d_0d_1+d_1d_2+d_2d_0=63,
 \qquad
 d_0d_1d_2=65+16\cos(3x).
\]
Therefore
\[
 \sum_{i=0}^2\frac1{\norm{K_i u_\theta}^2}
 =\sum_{j=0}^2\frac1{d_j}
 =\frac{63}{65+16\cos(6\theta)}
 \le\frac97.
\]
\end{proof}

\begin{lemma}\label{lem:universal-R}
For every $v\in\R^2\setminus\{0\}$ and every $n\ge1$,
\begin{equation}\label{eq:universal-R}
 R_n(v)\le\frac1{\norm{v}^2}\left(\frac97\right)^n.
\end{equation}
\end{lemma}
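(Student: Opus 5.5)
We argue by induction on $n$, using Lemma~\ref{lem:one-step} as the one-step estimate and the product convention $K_{wi}=K_iK_w$ from \eqref{eq:word-matrix}. The key observation is that Lemma~\ref{lem:one-step}, stated for unit vectors, extends by homogeneity to arbitrary nonzero vectors. For $v\ne0$ we write $v=\norm{v}u_\theta$ for a suitable angle $\theta$. Then $\norm{K_iv}^2=\norm{v}^2\norm{K_iu_\theta}^2$, and hence
\[
 \sum_{i=0}^2\frac1{\norm{K_iv}^2}\le\frac97\cdot\frac1{\norm{v}^2}
 \qquad(v\in\R^2\setminus\{0\}).
\]
The case $n=1$ is exactly this inequality, since $R_1(v)=\sum_i\norm{K_iv}^{-2}$.

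For the inductive step, every word of length $n+1$ is uniquely of the form $wi$ with $\abs{w}=n$ and $i\in\{0,1,2\}$. Since $K_{wi}=K_iK_w$, we can group the terms of $R_{n+1}(v)$ by their length-$n$ prefix:
\[
 R_{n+1}(v)=\sum_{\abs{w}=n}\sum_{i=0}^2\frac1{\norm{K_i(K_wv)}^2}.
\]
Each $K_w$ is invertible (its determinant is $3^n$), so $K_wv\ne0$. We may therefore apply the homogeneous form of Lemma~\ref{lem:one-step} to the vector $K_wv$, bounding the inner sum by $\frac97\norm{K_wv}^{-2}$. Summing over $w$ gives
\[
 R_{n+1}(v)\le\frac97R_n(v).
\]
Iterating from the base case yields \eqref{eq:universal-R}.

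We do not expect a genuine obstacle. The only point needing care is the product order. Grouping by the prefix $w$, so that the \emph{last} applied matrix $K_i$ acts on $K_wv$, is what makes the one-step lemma applicable uniformly in the direction of $K_wv$. This is also why we need the uniform bound $9/7$ over all angles $\theta$, rather than information about any particular direction.
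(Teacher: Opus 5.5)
Your proposal is correct and follows essentially the same argument as the paper: you extend Lemma~\ref{lem:one-step} to all nonzero vectors by homogeneity, apply it to $K_wv$ using $K_{wi}=K_iK_w$, deduce $R_{n+1}(v)\le\frac97R_n(v)$, and conclude by induction from the case $n=1$. Your remarks on invertibility of $K_w$ and on the product order make explicit points that the paper's proof leaves implicit.
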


\begin{proof}
By homogeneity, Lemma~\ref{lem:one-step} gives, for every
$v\in\R^2\setminus\{0\}$,
\begin{equation}\label{eq:homogeneous-one-step}
 \sum_{i=0}^2\frac1{\norm{K_i v}^2}
 \le\frac9{7\norm{v}^2}.
\end{equation}
In particular, this proves \eqref{eq:universal-R} for $n=1$.  Applying
\eqref{eq:homogeneous-one-step} to $K_wv$ and summing over all words $w$ of
length $n$ gives
\[
 R_{n+1}(v)\le\frac97R_n(v).
\]
Induction proves \eqref{eq:universal-R}.
\end{proof}

\begin{theorem}\label{thm:general}
For any ordered pair of nonconstant harmonic functions $h_1,h_2$, the sequence
$S_{h_1,h_2}(n,p)$, $n\ge1$, is nondecreasing in $n$ for $p>1$.  For each
fixed pair, it is bounded uniformly in $n$ whenever
\[
 1<p<\frac{\log(35/3)}{\log 9}.
\]
In this range,
\[
 \nu_{h_1}\ll\nu_{h_2}
 \qquad\text{and}\qquad
 \frac{d\nu_{h_1}}{d\nu_{h_2}}\in L^p(\nu_{h_2}).
\]
\end{theorem}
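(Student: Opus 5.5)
The plan is to combine Proposition~\ref{prop:master-increment} with the universal reciprocal-sum bound of Lemma~\ref{lem:universal-R}, and then close the argument with Lemma~\ref{lem:increment-summability} and Proposition~\ref{prop:martingale-criterion}. Monotonicity for every $p>1$ needs no further work: it is exactly the nonnegativity of the increments already established in Proposition~\ref{prop:master-increment}, which itself comes from the Jensen lower bound in Lemma~\ref{lem:one-cell-comparison}.

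For boundedness, fix $p$ in the stated range and set $v=z_{h_2}\neq 0$; this vector is nonzero because $h_2$ is nonconstant. Inserting \eqref{eq:universal-R} into \eqref{eq:master-increment} gives
\[
 0\le S_{h_1,h_2}(n+1,p)-S_{h_1,h_2}(n,p)
 \le \frac{C_{p,h_1,h_2}}{\norm{z_{h_2}}^2}\left(\frac{9^p}{15}\cdot\frac97\right)^n
 =C'\left(\frac{3\cdot 9^p}{35}\right)^n .
\]
The ratio $3\cdot 9^p/35$ is less than $1$ exactly when $9^p<35/3$, that is, when $p<\log(35/3)/\log 9$. In that case $b_n=C'(3\cdot 9^p/35)^n$ is a summable geometric sequence. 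Part~1 of Lemma~\ref{lem:increment-summability}, applied with $a_n=S_{h_1,h_2}(n,p)$, then yields $\sup_n S_{h_1,h_2}(n,p)\le S_{h_1,h_2}(1,p)+\sum_n b_n<\infty$. The first term is finite, since it is a finite sum of positive numbers by \eqref{eq:cell-mass-bounds}.

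Finally, Proposition~\ref{prop:martingale-criterion}, the implication (1)$\Rightarrow$(2), converts uniform boundedness into $\nu_{h_1}\ll\nu_{h_2}$ together with $d\nu_{h_1}/d\nu_{h_2}\in L^p(\nu_{h_2})$. It is worth checking that the interval is nonempty, i.e.\ that $35/3>9$; this holds, so the range $(1,\log(35/3)/\log 9)$ is a genuine interval. None of the steps is a real obstacle, since all the analytic work is contained in the earlier increment estimate and the one-step bound of Lemma~\ref{lem:one-step}. The only point that needs care is the bookkeeping of the exponent: the geometric rate is the product of the factor $(9^p/15)^n$ from Proposition~\ref{prop:master-increment} and the factor $(9/7)^n$ from Lemma~\ref{lem:universal-R}, and it is this product that produces the threshold $35/3$.
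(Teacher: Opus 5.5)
Your proposal is correct and matches the paper's proof. Inserting Lemma~\ref{lem:universal-R} into Proposition~\ref{prop:master-increment} bounds the increments by $C(3\cdot 9^p/35)^n$, which part~1 of Lemma~\ref{lem:increment-summability} sums for $p<\log(35/3)/\log 9$; Proposition~\ref{prop:martingale-criterion} then gives absolute continuity and the $L^p$ conclusion, while monotonicity comes directly from Proposition~\ref{prop:master-increment}.
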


\begin{proof}
Monotonicity follows directly from Proposition~\ref{prop:master-increment}.
Combining that proposition with Lemma~\ref{lem:universal-R} gives
\begin{align*}
0&\le S_{h_1,h_2}(n+1,p)-S_{h_1,h_2}(n,p)\\
&\le C_{p,h_1,h_2}
\left(\frac{9^p}{15}\right)^n
\frac1{\norm{z_{h_2}}^2}\left(\frac97\right)^n.
\end{align*}
The factor $1/\norm{z_{h_2}}^2$ is independent of $n$, so it may be absorbed
into $C_{p,h_1,h_2}$.
Thus
\[
0\le S_{h_1,h_2}(n+1,p)-S_{h_1,h_2}(n,p)
\le C_{p,h_1,h_2}
\left(\frac{3\cdot 9^p}{35}\right)^n.
\]
For $1<p<\log(35/3)/\log 9$, set
\[
 \rho=\frac{3\cdot9^p}{35},
\]
so $0<\rho<1$.  Lemma~\ref{lem:increment-summability}, applied with
$b_n=C_{p,h_1,h_2}\rho^n$, gives uniform boundedness and convergence.
The $L^p$ conclusion follows from
Proposition~\ref{prop:martingale-criterion}.
\end{proof}

\section{\texorpdfstring{The harmonic direction represented by
$(0,-1,1)$: lattice coding and the full interval}{The harmonic direction
represented by (0,-1,1): lattice coding and the full interval}}
\label{sec:boundary-direction}

Proposition~\ref{prop:master-increment} reduces the boundedness problem to
controlling the reciprocal sum
\[
 R_n(z_{h_2})=
 \sum_{\abs{w}=n}\frac1{\norm{K_wz_{h_2}}^2}.
\]
For a general denominator harmonic direction,
Section~\ref{sec:general-pair} bounded this sum by a geometric sequence.  The
harmonic direction represented by the boundary values $(0,-1,1)$ has
additional arithmetic structure: after a change of coordinates, every
product $K_we_2$ is represented by an integer lattice point.  Different
words of the same length give different lattice points, which allows us to
replace the word sum by a larger but elementary lattice sum.

To see why $e_2$ represents this harmonic direction, substitute
$(a,b,c)=(0,-1,1)$ into \eqref{eq:harmonic-coordinates}.  This gives
\[
 z_h=\begin{pmatrix}0\\\sqrt3\end{pmatrix}=\sqrt3e_2.
\]
Since
\[
 R_n(\lambda v)=\frac1{\lambda^2}R_n(v)
 \qquad(\lambda\in\R\setminus\{0\}),
\]
the fixed factor $\sqrt3$ has no effect on the growth rate.  It is therefore
enough to prove a bound for $R_n(e_2)$.

\subsection{The reciprocal lattice bound}

\begin{lemma}\label{lem:lattice}
There is an absolute constant $C>0$ such that
\[
 R_n(e_2)\le Cn
\]
for every $n\ge1$.
\end{lemma}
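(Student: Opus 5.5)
The plan is to turn the word sum $R_n(e_2)$ into a sum over distinct nonzero points of a fixed lattice of radius at most $3^n$, and then use the elementary estimate $\sum_{0<\norm{m}\le T}\norm{m}^{-2}=O(\log T)$.

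\textbf{Step 1 (integer conjugacy).} I would write $z=(x,y)^\transpose=(\sqrt3\,s,\;t)^\transpose$, i.e.\ conjugate by $D=\mathrm{diag}(\sqrt3,1)$, and if necessary make a further rational change of basis. The matrices $M_i=D^{-1}K_iD$ are
\[
 M_0=\begin{pmatrix}3&0\\0&1\end{pmatrix},\qquad
 M_1=\frac12\begin{pmatrix}3&1\\3&5\end{pmatrix},\qquad
 M_2=\frac12\begin{pmatrix}3&-1\\-3&5\end{pmatrix}.
\]
These are integral on the lattice $\Lambda=\{(s,t):s,t\in\tfrac12\Z,\ s+t\in\Z\}$; in fact $M_1(s,t)=((3s+t)/2,(3s+5t)/2)$. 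An equivalent way to see the integrality is through boundary values: $5^n$ times the boundary values of $h\circ F_w$ are integers, and $K_we_2$ is (up to the constant $\sqrt3$) given by integer boundary differences $(5^n(a-b),5^n(b-c))$ of $h\circ F_w$. After a fixed linear change of variables, $\Lambda$ becomes $\Z^2$, $e_2$ corresponds to a primitive lattice point, and each $M_i$ becomes an integer matrix $N_i$ with $\det N_i=3$. Because the change of variables is fixed, $\norm{K_we_2}\asymp\norm{N_w m_0}$ with absolute constants.

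\textbf{Step 2 (injectivity of the coding on words of fixed length).} This is the main obstacle. Reduce mod $3$: each $N_i$ has rank one over $\mathbb F_3$, since its eigenvalue $3$ vanishes there, so its image mod $3$ is a line $L_i$ (the eigenvalue-$1$ eigenline mod $3$). The first thing to check is that $L_0,L_1,L_2$ are pairwise distinct lines in $\mathbb F_3^2$. Next, by induction, the kernel of $N_i$ mod $3$ (the eigenvalue-$3$ line) must never contain $N_wm_0$ mod $3$; that is, $N_wm_0\not\equiv0$ and $N_wm_0$ never falls into a kernel line. Given these two facts, $N_iu\equiv N_ju'\pmod 3$ with both sides nonzero forces $i=j$, because $L_i\neq L_j$. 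Invertibility of $N_i$ over $\mathbb Q$ then gives $u=u'$, and induction on the length shows that $w\mapsto N_wm_0$ is injective on words of length $n$. If the mod-$3$ invariants fail for $m_0$ itself, I would instead track the $3$-adic valuation more carefully, or use a $\det$/orientation argument, but the mod-$3$ line structure is what I would try first.

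\textbf{Step 3 (lattice sum).} By \eqref{eq:K-basic}, $1\le\norm{K_we_2}\le3^n$, so the points $N_wm_0$ are distinct nonzero lattice points with norm at most $C3^n$. Hence
\[
 R_n(e_2)\le C\sum_{m\in\Z^2,\ 0<\norm{m}\le C3^n}\frac1{\norm{m}^2}.
\]
Grouping the lattice points into dyadic annuli, each annulus contributes $O(1)$, and there are $O(n)$ annuli. This gives $R_n(e_2)\le Cn$.
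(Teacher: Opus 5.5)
Your proposal is correct and follows the paper's route. Your lattice $\Lambda$ is exactly $D^{-1}T\Z^2$ for the paper's conjugating matrix $T$, your mod-$3$ image-line argument is the paper's residue argument for injectivity, and your dyadic annuli play the role of the paper's square shells. The mod-$3$ facts you left to check do hold: in the paper's coordinates the three reductions share the kernel line $r+s\equiv0$, and their images are the distinct lines spanned by $(0,1)$, $(1,0)$, $(1,1)$, none of which is that kernel. So the invariant your induction needs is guaranteed by the paper's $r+s\equiv1\pmod 3$, and your fallback options are not needed.
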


\begin{proof}
The proof has three parts: we encode the vectors $K_we_2$ by integer lattice
points, prove that this coding is injective at each fixed level, and then
estimate the resulting lattice sum by square shells.

For $q=(r,s)^\transpose\in\R^2$, write
\[
 Q(q)=r^2-rs+s^2.
\]
Introduce the invertible matrix
\[
 T=\begin{pmatrix}\sqrt3/2&0\\-1/2&1\end{pmatrix}
\]
and the integer matrices
\[
 L_0=\begin{pmatrix}3&0\\1&1\end{pmatrix},
 \qquad
 L_1=\begin{pmatrix}1&1\\0&3\end{pmatrix},
 \qquad
 L_2=\begin{pmatrix}2&-1\\-1&2\end{pmatrix}.
\]
Direct multiplication gives
\begin{equation}\label{eq:lattice-conjugacy}
 K_iT=TL_i
 \qquad(i=0,1,2),
\end{equation}
and
\begin{equation*}
 \norm{Tq}^2=Q(q)
 \qquad(q\in\R^2).
\end{equation*}

Define the lattice codes recursively by
\[
 q_\varnothing=(0,1)^\transpose,
 \qquad
 q_{wi}=L_iq_w.
\]
Thus, if $w=i_1\cdots i_n$, then
\[
 q_w=L_{i_n}\cdots L_{i_1}(0,1)^\transpose\in\Z^2.
\]
This recursion mirrors the convention $K_{wi}=K_iK_w$.  Since
$e_2=Tq_\varnothing$, repeated use of
\eqref{eq:lattice-conjugacy} gives
\begin{equation*}
 K_we_2=Tq_w.
\end{equation*}
Consequently,
\begin{equation}\label{eq:q-norm-form}
 \norm{K_we_2}^2=Q(q_w),
\end{equation}
and hence
\begin{equation}\label{eq:R-lattice-representation}
 R_n(e_2)=\sum_{\abs{w}=n}\frac1{Q(q_w)}.
\end{equation}

We next prove that, for each fixed $n$, the coding map
\[
 \{0,1,2\}^n\longrightarrow\Z^2,
 \qquad
 w\longmapsto q_w,
\]
is injective.  The key observation is that the residue of $q_w$ modulo $3$
records the last letter of $w$.

Indeed, for $q=(r,s)^\transpose$,
\begin{align*}
 L_0q&=(3r,r+s)^\transpose,\\
 L_1q&=(r+s,3s)^\transpose,\\
 L_2q&=(2r-s,-r+2s)^\transpose.
\end{align*}
The coordinate sums of these three vectors are, respectively,
$r+s+3r$, $r+s+3s$, and $r+s$.  Since the coordinate sum of
$q_\varnothing$ is $1$, induction gives
\begin{equation}\label{eq:coordinate-sum-congruence}
 r+s\equiv1\pmod3
 \qquad\text{whenever }q_w=(r,s)^\transpose.
\end{equation}
Under this congruence,
\begin{equation}\label{eq:last-letter-residues}
\begin{aligned}
 L_0(r,s)^\transpose&\equiv(0,1)^\transpose,\\
 L_1(r,s)^\transpose&\equiv(1,0)^\transpose,\\
 L_2(r,s)^\transpose&\equiv(2,2)^\transpose
\end{aligned}
\qquad\pmod3.
\end{equation}
For the last line, \eqref{eq:coordinate-sum-congruence} gives
$s\equiv1-r\pmod3$, and therefore
\[
 2r-s\equiv3r-1\equiv2,
 \qquad
 -r+2s\equiv2-3r\equiv2
 \pmod3.
\]
Thus the three residues in \eqref{eq:last-letter-residues} distinguish the
three possible last letters.

Now let $w=i_1\cdots i_n$ and $w'=j_1\cdots j_n$ satisfy $q_w=q_{w'}$.
Their common residue determines the last letter, so $i_n=j_n$.  Since each
$L_i$ has determinant $3$, it is invertible over $\R$, and we may cancel
this last matrix to obtain
\[
 q_{i_1\cdots i_{n-1}}=q_{j_1\cdots j_{n-1}}.
\]
Repeating the same argument recovers all the letters, from last to first.
Hence $w=w'$, proving that the coding map is injective at level $n$.

It remains to locate these lattice points and estimate the enlarged sum.
By the upper singular-value bound in \eqref{eq:K-basic},
\[
 \norm{K_we_2}\le3^n.
\]
It follows from \eqref{eq:q-norm-form} that
\begin{equation*}
 Q(q_w)\le9^n.
\end{equation*}
Moreover, $q_w\ne0$ because $q_\varnothing\ne0$ and all the
matrices $L_i$ are invertible.  Since the lattice codes at level $n$ are
distinct and all terms are positive, \eqref{eq:R-lattice-representation}
may be enlarged to
\begin{equation}\label{eq:enlarged-lattice-sum}
 R_n(e_2)
 \le
 \sum_{\substack{q\in\Z^2\setminus\{0\}\\Q(q)\le9^n}}
 \frac1{Q(q)}.
\end{equation}

Write $q=(r,s)^\transpose$ and set
$k=\max(\abs{r},\abs{s})$.  Then
\[
 Q(q)
 =\frac12(r^2+s^2)+\frac12(r-s)^2
 \ge\frac12(r^2+s^2)
 \ge\frac{k^2}{2}.
\]
Thus, if $q$ occurs in \eqref{eq:enlarged-lattice-sum}, then
\[
 k\le\sqrt2\,3^n,
 \qquad
 \frac1{Q(q)}\le\frac2{k^2}.
\]

For each $k\ge1$, the lattice points satisfying
$\max(\abs{r},\abs{s})=k$ form the boundary of the integer square
$[-k,k]^2$.  Their number is
\[
 (2k+1)^2-(2k-1)^2=8k.
\]
Therefore, with $N=\lceil\sqrt2\,3^n\rceil$,
\begin{align*}
 R_n(e_2)
 &\le\sum_{k=1}^{N}8k\frac2{k^2}\\
 &=16\sum_{k=1}^{N}\frac1k\\
 &\le16(1+\log N)
 \le Cn,
\end{align*}
where the last inequality follows from
$N\le2\sqrt2\,3^n$ for $n\ge1$.  This proves the lemma.
\end{proof}

Lemma~\ref{lem:lattice} replaces the geometric bound from
Section~\ref{sec:general-pair} by the much smaller estimate
$R_n(e_2)=O(n)$.  Proposition~\ref{prop:master-increment} then gives a
polynomial factor times $(9^p/15)^n$, which is summable on the full
conjectured interval.

\begin{corollary}
\label{cor:boundary-denominator}
Let $h_1,h_2$ be nonconstant harmonic functions, and assume that
$z_{h_2}$ is proportional to $e_2$, equivalently that the harmonic direction
of $h_2$ is represented by the boundary values $(0,-1,1)$.  Then, for every
fixed $p$ satisfying
\[
 1 < p<\frac{\log 15}{\log 9},
\]
one has
\[
 \sup_{n\ge1}S_{h_1,h_2}(n,p)<\infty.
\]
For $1<p<\log 15/\log 9$, it follows that
\[
 \nu_{h_1}\ll\nu_{h_2}
 \qquad\text{and}\qquad
 \frac{d\nu_{h_1}}{d\nu_{h_2}}\in L^p(\nu_{h_2}).
\]
\end{corollary}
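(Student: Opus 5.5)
The plan is to feed Lemma~\ref{lem:lattice} into Proposition~\ref{prop:master-increment}. The resulting increments are summable on the full interval, and Lemma~\ref{lem:increment-summability} together with Proposition~\ref{prop:martingale-criterion} then give the conclusions.

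First I reduce to $z_{h_2}=\lambda e_2$ with $\lambda\ne0$. This is exactly the hypothesis, since $(0,-1,1)$ gives $z_h=\sqrt3e_2$ and a harmonic direction rescales $z_h$ by a nonzero factor. By homogeneity, $R_n(z_{h_2})=\lambda^{-2}R_n(e_2)$, and Lemma~\ref{lem:lattice} gives $R_n(z_{h_2})\le C\lambda^{-2}n$ for all $n\ge1$. Substituting into \eqref{eq:master-increment} yields
\[
 0\le S_{h_1,h_2}(n+1,p)-S_{h_1,h_2}(n,p)\le C'_{p,h_1,h_2}\,n\left(\frac{9^p}{15}\right)^n,
\]
where the constant absorbs $C$, $\lambda^{-2}$ and $C_{p,h_1,h_2}$. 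None of these depend on $n$.

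Next I fix $p$ with $1<p<\log15/\log9$ and set $\rho=9^p/15$. The inequality $p<\log 15/\log 9$ is equivalent to $9^p<15$, so $0<\rho<1$. Then $b_n=C'n\rho^n$ is nonnegative and summable, for instance by the ratio test, or by the closed form $\sum_n n\rho^n=\rho/(1-\rho)^2$. Part 1 of Lemma~\ref{lem:increment-summability}, applied with $a_n=S_{h_1,h_2}(n,p)$, gives $\sup_n S_{h_1,h_2}(n,p)\le S_{h_1,h_2}(1,p)+\sum_n b_n<\infty$. The first term is finite because it is a finite sum of positive terms.

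Finally, Proposition~\ref{prop:martingale-criterion}, direction (1)$\Rightarrow$(2), yields $\nu_{h_1}\ll\nu_{h_2}$ and $d\nu_{h_1}/d\nu_{h_2}\in L^p(\nu_{h_2})$.

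There is no serious obstacle here. The substantive work is in Lemma~\ref{lem:lattice}: the injective lattice coding is what reduces the reciprocal sum to linear growth, and this is why the polynomial factor $n$ does not destroy summability right up to the critical exponent. The only point that needs care is the bookkeeping. One must check that $C_{p,h_1,h_2}$ in Proposition~\ref{prop:master-increment} is independent of $n$ and that the scaling of $z_{h_2}$ enters only through a fixed factor. Both hold by the statements of those results.
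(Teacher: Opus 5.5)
Your proposal is correct and is essentially the paper's own proof. Homogeneity and Lemma~\ref{lem:lattice} give $R_n(z_{h_2})=O(n)$, so Proposition~\ref{prop:master-increment} bounds the increments by $C\,n\rho^n$ with $\rho=9^p/15<1$, and Lemma~\ref{lem:increment-summability} together with Proposition~\ref{prop:martingale-criterion} finishes the argument.
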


\begin{proof}
Assume that
$1<p<\log 15/\log 9$.
The assumption on $h_2$ means that
\[
 z_{h_2}=\lambda e_2
\]
for some scalar $\lambda\in\R\setminus\{0\}$.  By homogeneity and
Lemma~\ref{lem:lattice},
\[
 R_n(z_{h_2})
 =\frac1{\lambda^2}R_n(e_2)
 \le\frac{C}{\lambda^2}n.
\]
The fixed factor $C/\lambda^2$ may be absorbed into a constant depending on
$h_2$.
Proposition~\ref{prop:master-increment} therefore yields
\[
0\le S_{h_1,h_2}(n+1,p)-S_{h_1,h_2}(n,p)
\le C_{p,h_1,h_2}n\left(\frac{9^p}{15}\right)^n.
\]
Set
\[
 \rho=\frac{9^p}{15},
 \qquad 0<\rho<1.
\]
Lemma~\ref{lem:increment-summability}, applied with
$b_n=C_{p,h_1,h_2}n\rho^n$, gives uniform boundedness and convergence.
The measure-theoretic conclusion follows from
Proposition~\ref{prop:martingale-criterion}.  This proves the corollary.
\end{proof}

We now specialize to the ordered pair of harmonic directions represented by
the boundary values $(1,0,0)$ and $(0,-1,1)$.  The corollary gives
boundedness below the endpoint, while the branch $0^n$, on which
$K_0e_1=3e_1$ and $K_0e_2=e_2$, gives the matching lower bounds.

\subsection{\texorpdfstring{The ordered pair with harmonic directions
represented by $(1,0,0)$ and $(0,-1,1)$}{The ordered pair represented by
(1,0,0) and (0,-1,1)}}

\begin{theorem}
\label{thm:sharp}
Let $h_1$ and $h_2$ have harmonic directions represented by boundary values
$(1,0,0)$ and $(0,-1,1)$, respectively.
For every $p>1$,
\[
 \sup_{n\ge1}S_{h_1,h_2}(n,p)<\infty
 \quad\Longleftrightarrow\quad
 p<\frac{\log 15}{\log 9}.
\]
Moreover, $\nu_{h_1}\ll\nu_{h_2}$, and, for every $p>1$,
\[
 \frac{d\nu_{h_1}}{d\nu_{h_2}}\in L^p(\nu_{h_2})
 \quad\Longleftrightarrow\quad
 p<\frac{\log 15}{\log 9}.
\]
The sequence $S_{h_1,h_2}(n,p)$, $n\ge1$, is strictly increasing for every
$p>1$.  At $p=\log 15/\log 9$ it grows at least linearly, and for
$p>\log 15/\log 9$,
\[
 S_{h_1,h_2}(n,p)\ge
 \left(\frac{9^p}{15}\right)^n.
\]
\end{theorem}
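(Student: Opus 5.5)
The proof splits into the regimes $p<p_*$, $p>p_*$ and $p=p_*$, where $p_*=\log 15/\log 9$. We fix representatives $z_{h_1}=e_1$, which is $(3/2)e_1$ up to scaling, and $z_{h_2}=\sqrt3e_2$. Since normalized measures ignore scaling, we may use $u=e_1$ and $v=e_2$.

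\emph{Boundedness for $p<p_*$.} This is exactly Corollary~\ref{cor:boundary-denominator}. Then Proposition~\ref{prop:martingale-criterion} gives the $L^p$ equivalence in that direction. Absolute continuity $\nu_{h_1}\ll\nu_{h_2}$ also follows from the corollary, applied at any admissible $p$ (for instance $p=1.1$).

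\emph{Strict monotonicity.} Use the exact increment formula \eqref{eq:exact-increment}. Each bracket is nonnegative, with equality iff $P_i=Q_i$ for all $i$. From the proof of Lemma~\ref{lem:one-cell-comparison},
\[
\abs{P_i-Q_i}=\frac8{15}\abs{\sin(\varphi+\psi-2\gamma_i)}\abs{\sin(\varphi-\psi)},
\]
and $\sin(\varphi-\psi)\ne0$ because $\det(K_we_1,K_we_2)=3^n\ne0$. Hence $P_i=Q_i$ for all $i$ would force $\varphi+\psi\equiv2\gamma_i \pmod\pi$ for all three $\gamma_i\in\{0,\pi/3,-\pi/3\}$. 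This is impossible, since the values $2\gamma_i$ are distinct mod $\pi$. So every bracket is strictly positive, every prefactor is positive, and the increment is strictly positive.

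\emph{Lower bounds.} For $p>p_*$, isolate the single word $0^n$. Since $K_0^ne_1=3^ne_1$ and $K_0^ne_2=e_2$, formula \eqref{eq:exact-cell-formula} gives $\nu_{h_1}=(3/5)^n$ and $\nu_{h_2}=15^{-n}$ on $F_0^n(K)$. Its contribution is therefore $(9^p/15)^n$, and since all terms are positive, $S\ge(9^p/15)^n$, which is unbounded. For $p=p_*$, I would apply Lemma~\ref{lem:increment-summability}(2). I would lower-bound the increment by the $w=0^n$ term of \eqref{eq:exact-increment}. Its prefactor is $(9^{p_*}/15)^n=1$, and its bracket is $\sum_iP_i^pQ_i^{1-p}-1$ computed for the pair $(K_0^ne_1,K_0^ne_2)$, i.e. for the directions $e_1,e_2$ themselves, because $K_0^n$ preserves both lines. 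That bracket is therefore a constant $c>0$ independent of $n$, strictly positive by the strictness argument above. So $S(n+1)-S(n)\ge c$, which gives linear growth and unboundedness at $p_*$.

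\emph{Combining.} Boundedness fails for $p\ge p_*$ and holds for $p<p_*$. The $L^p$ equivalence for all $p>1$ then follows from Proposition~\ref{prop:martingale-criterion}, since absolute continuity already holds.

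The only step needing care is the endpoint. There the key observation is that the cell $0^n$ keeps the directions of $K_0^n e_1$ and $K_0^n e_2$ fixed, so the one-cell Jensen gap does not decay with $n$. If one prefers not to invoke the angle formula, the constant $c$ can be checked directly: $P=(9/15,3/15,3/15)$ and $Q=(1/15,7/15,7/15)$, which are visibly unequal.
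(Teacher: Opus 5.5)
Your proposal is correct and follows the paper's proof essentially step by step: the corollary handles $p<p_*$, the single word $0^n$ gives $S_{h_1,h_2}(n,p)\ge(9^p/15)^n$, and the $0^n$ term of \eqref{eq:exact-increment} gives the constant endpoint increment $\frac{14}{15}(3/7)^{p_*}$. The only variation is that you prove strict monotonicity by showing every bracket is positive via the angle formula, which is valid since $2\gamma_i\in\{0,\pm2\pi/3\}$ are distinct mod $\pi$, whereas the paper simply keeps the $0^n$ bracket; also, $(1,0,0)$ gives $z_{h_1}=e_1$ exactly, so the ``$(3/2)e_1$'' aside is a harmless slip.
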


\begin{remark}
The quantity $S_{h_1,h_2}(n,p)$ is not symmetric in the two measures.  The
theorem makes no claim about the sharp threshold for the reversed ordered
pair.
\end{remark}

\begin{proof}
Under \eqref{eq:harmonic-coordinates}, the boundary representatives
$(1,0,0)$ and $(0,-1,1)$ correspond to $e_1$ and $\sqrt3e_2$,
respectively.  By the homogeneity of \eqref{eq:exact-cell-formula}, we use
$e_1$ and $e_2$ as representative vectors.
Boundedness below the endpoint follows from
Corollary~\ref{cor:boundary-denominator}.  It remains to prove strict
increase and the stated lower bounds.

The matrix $K_0$ acts particularly simply on these representative vectors:
\[
 K_0e_1=3e_1,
 \qquad
 K_0e_2=e_2.
\]
After $n$ repetitions,
\[
 K_{0^n}e_1=3^ne_1,
 \qquad
 K_{0^n}e_2=e_2.
\]
Substitution into \eqref{eq:exact-cell-formula} gives
\[
 \nu_{h_1}(F_0^n(K))=\frac{9^n}{15^n}=\left(\frac35\right)^n,
 \qquad
 \nu_{h_2}(F_0^n(K))=\frac1{15^n}=\left(\frac1{15}\right)^n.
\]
Hence the contribution of this word at level $n$ is
\begin{align*}
\nu_{h_1}(F_0^n(K))^p\nu_{h_2}(F_0^n(K))^{1-p}
&=\left[
 \left(\frac35\right)^p
 \left(\frac1{15}\right)^{1-p}
 \right]^n\\
&=\left(\frac{9^p}{15}\right)^n.
\end{align*}
This is one of the nonnegative terms in the level-$n$ sum defining
$S_{h_1,h_2}(n,p)$.  Therefore
\begin{equation}\label{eq:zero-branch-contribution}
 S_{h_1,h_2}(n,p)\ge\left(\frac{9^p}{15}\right)^n.
\end{equation}

We next prove that $S_{h_1,h_2}(n,p)$ is strictly increasing in $n$.  For the
level-$n$ word $0^n$, the vectors $K_{0^n}e_1$ and $K_{0^n}e_2$ remain
proportional to $e_1$ and $e_2$.
Consequently, the refinement proportions $P_i$ and $Q_i$ appearing in the
exact increment formula \eqref{eq:exact-increment} are obtained by applying
the three matrices $K_i$ directly to $e_1$ and $e_2$.
Since both vectors have norm $1$, the formulas from the proof of
Proposition~\ref{prop:master-increment} become
\[
 P_i=\frac{\norm{K_ie_1}^2}{15},
 \qquad
 Q_i=\frac{\norm{K_ie_2}^2}{15}.
\]
Direct multiplication gives
\[
\norm{K_0e_1}^2=9,
\quad
\norm{K_1e_1}^2=\norm{K_2e_1}^2=3,
\]
and
\[
\norm{K_0e_2}^2=1,
\quad
\norm{K_1e_2}^2=\norm{K_2e_2}^2=7.
\]
Hence the two refinement distributions are
\[
 (P_0,P_1,P_2)=\left(\frac35,\frac15,\frac15\right),
 \qquad
 (Q_0,Q_1,Q_2)=\left(\frac1{15},\frac7{15},\frac7{15}\right).
\]
They are different.  The strict form of Jensen's inequality established in
the proof of Lemma~\ref{lem:one-cell-comparison} therefore shows that
the corresponding bracket in \eqref{eq:exact-increment} is positive for
every $p>1$.  The sum inside that bracket equals
\begin{align*}
\sum_{i=0}^2P_i^pQ_i^{1-p}
&=\left(\frac35\right)^p\left(\frac1{15}\right)^{1-p}
 +2\left(\frac15\right)^p\left(\frac7{15}\right)^{1-p}\\
&=\frac{9^p}{15}
 +\frac{14}{15}\left(\frac37\right)^p.
\end{align*}

All summands in \eqref{eq:exact-increment} are nonnegative.  For the single
level-$n$ word $w=0^n$, the factor outside the bracket is the word
contribution $(9^p/15)^n$ calculated above.  Keeping only this word yields
\begin{align}
&S_{h_1,h_2}(n+1,p)-S_{h_1,h_2}(n,p)\notag\\
&\qquad\ge
\left[
 \frac{9^p}{15}
 +\frac{14}{15}\left(\frac37\right)^p-1
\right]
\left(\frac{9^p}{15}\right)^n.
\label{eq:zero-branch-increment}
\end{align}
Both factors on the right are positive for every $n\ge1$ and $p>1$.  Hence
$S_{h_1,h_2}(n,p)$ increases strictly at every level.

Set
\[
 p_*=\frac{\log 15}{\log 9},
 \qquad
 c_*=\frac{14}{15}\left(\frac37\right)^{p_*}>0.
\]
Since $9^{p_*}/15=1$, \eqref{eq:zero-branch-increment} gives
\[
 S_{h_1,h_2}(n+1,p_*)-S_{h_1,h_2}(n,p_*)\ge c_*.
\]
The second part of Lemma~\ref{lem:increment-summability} therefore yields
\[
 S_{h_1,h_2}(N,p_*)\ge S_{h_1,h_2}(1,p_*)+c_*(N-1),
 \qquad N\ge1,
\]
which proves linear growth at the endpoint.

Together with \eqref{eq:zero-branch-contribution}, this proves the
boundedness threshold.  Choose $p_0$ so that
\[
 1<p_0<\frac{\log 15}{\log 9}.
\]
Corollary~\ref{cor:boundary-denominator} then gives
$\nu_{h_1}\ll\nu_{h_2}$.  For each fixed $p>1$,
Proposition~\ref{prop:martingale-criterion} identifies membership of the
Radon--Nikodym density in $L^p(\nu_{h_2})$ with boundedness of
$S_{h_1,h_2}(n,p)$.  The boundedness threshold just proved therefore gives
the stated $L^p$ equivalence.
\end{proof}

\section*{Acknowledgment}

This work was developed with assistance from \emph{GPT-5.6}, which was used
to explore ideas and support aspects of the technical development and
writing.  All results and arguments were independently verified by the
author, who assumes full responsibility for the content of the paper.

\end{document}